\documentclass[english]{amsart}
\usepackage{dsfont}
\usepackage{url}
\usepackage[utf8]{inputenc}
\usepackage[T1]{fontenc}
\usepackage{lmodern}
\usepackage{babel}
\usepackage{mathtools}  
\usepackage{amssymb}
\usepackage{lipsum}
\usepackage{mathrsfs}
 \usepackage{mathabx}
\usepackage{color}
\usepackage{ulem}

\newtheorem{theorem}{Theorem}
\newtheorem{proposition}{Proposition}
\newtheorem{corollary}{Corollary}

\newtheorem{remark}{Remark}


\title[Elliptic Cauchy problem]{New global logarithmic stability of the Cauchy problem for elliptic equations}

\author[Mourad Choulli]{Mourad Choulli}
\address{Universit\'e de Lorraine, 34 cours L\'eopold, 54052 Nancy cedex, France}
\email{mourad.choulli@univ-lorraine.fr}

\thanks{The author is supported by the grant ANR-17-CE40-0029 of the French National Research Agency ANR (project MultiOnde). }


\begin{document}

\begin{abstract}
In this  short paper we prove a global logarithmic stability of the Cauchy problem for $H^2$-solutions of an anisotropic elliptic equation in a Lipschitz domain. The result we obtained is based on tools borrowed from the existing technics to establish stability estimate for the Cauchy problem \cite{Ch1} (see also \cite{BC}) combined with tools we already used in \cite{CT} to study an inverse medium problem.
\end{abstract}



\maketitle


Throughout this text, $\Omega$ is a Lipschitz bounded domain of $\mathbb{R}^n$, $n\ge 2$, and $\Gamma$ is a nonempty open subset of $\partial \Omega$. Consider then the divergence form elliptic operator $L$ that acts as follows
\[
Lu(x)=\mbox{div}(A(x)\nabla u(x) ),
\]
where $A=(a^{ij})$ is a symmetric matrix, with  coefficients in $W^{1,\infty}(\Omega )$, so that there exist $\kappa >0$ and $\lambda \ge 1$ for which 
\begin{equation}\label{1}
\lambda ^{-1}|\xi |^2 \le A(x)\xi \cdot \xi \le \lambda |\xi |^2,\quad  x\in \Omega , \; \xi \in \mathbb{R}^n,
\end{equation}
and
\begin{equation}\label{2}
\sum_{k=1}^n\left|\sum_{i,j=1}^n\partial_k a^{ij}(x)\xi _i\xi_j\right| \le \kappa |\xi|^2,\quad x\in \Omega ,\; \xi \in \mathbb{R}^n.
\end{equation}

The Cauchy problem we consider can be stated as follows: given $(F,f,g)\in L^2(\Omega )\times L^2(\Gamma )\times L^2(\Gamma )^n$, find $u\in H^2(\Omega )$ obeying to the boundary value problem
\begin{equation}\label{cp}
\left\{
\begin{array}{ll}
Lu(x)=F(x)\quad &\mbox{a.e. in}\; \Omega .
\\
u(x)=f &\mbox{a.e. on}\; \Gamma ,
\\
\nabla u(x)=g &\mbox{a.e. on}\; \Gamma .
\end{array}
\right.
\end{equation}
It is well known that this problem may not have a solution and, according to the classical uniqueness of continuation from Cauchy data, the boundary value problem \eqref{cp} has at most one solution. Moreover, even if the solution of \eqref{cp} exists, the continuous dependence of the solution on the data $(F,f,g)$ is not in general Lipschitz. In other words, the Cauchy problem is ill-posed in Hadamard's sense. As it is shown by Hadamard \cite{Ha}, the modulus of continuity of the mapping $(F,f,g)\mapsto u$ can be of logarithmic type. Therefore, for the general Cauchy problem, the logarithmic type stability estimate is the best possible one that we can expect.

We aim here to prove  the following result.
\begin{theorem}\label{theorem1}
Let $0<s<\frac{1}{2}$. Then there exist two constants $c>0$ and $C>0$, only depending on $s$, $\Omega$, $\Gamma$, $\lambda$ and $\kappa$, and $\delta_0$ only depending  on $\Omega$, so that, for any $u\in H^2(\Omega )$, $0<\delta<\delta_0$ and $j=0,1$, we have
\begin{align}
C\|u\|_{H^j(\Omega )}&\le \delta ^{\frac{s}{j+1}}\|u\|_{H^{j+1}(\Omega )}\label{0}
\\
&+e^{e^{c/\delta}}\left(\|u\|_{L^2(\Gamma)}+\|\nabla u\|_{L^2(\Gamma)}+\|Lu\|_{L^2(\Omega)}\right).\nonumber
\end{align}
\end{theorem}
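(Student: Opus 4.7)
\medskip

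The plan is to reduce \eqref{0} to a classical double-logarithmic stability estimate for the Cauchy problem, and then to extract the two values of $j$ by a dichotomy on $\delta$ together with an interpolation step in the case $j=1$.

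\smallskip

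\emph{Step 1 (master estimate).} Let
\[
\Phi:=\|u\|_{L^2(\Gamma)}+\|\nabla u\|_{L^2(\Gamma)}+\|Lu\|_{L^2(\Omega)}.
\]
Following the strategy of \cite{Ch1,BC} combined with the boundary-layer trick from \cite{CT}, I would establish that, when $\Phi/\|u\|_{H^1(\Omega)}$ is small enough,
\begin{equation}\label{eq:master-stab}
\|u\|_{L^2(\Omega)}\le C\,\|u\|_{H^1(\Omega)}\,\bigl|\log|\log(\Phi/\|u\|_{H^1(\Omega)})|\bigr|^{-s}.
\end{equation}
Three ingredients feed into \eqref{eq:master-stab}: (a)~a Carleman estimate for $L$ near $\Gamma$, valid on the Lipschitz domain after a bi-Lipschitz flattening of $\partial\Omega$, yielding a H\"older local bound $\|u\|_{H^1(\omega)}\le C\|u\|_{H^1(\Omega)}^{1-\alpha}\Phi^{\alpha}$ on some neighborhood $\omega$ of $\Gamma$ inside $\Omega$; (b)~Hadamard's three-spheres inequality iterated along chains of balls, propagating this H\"older bound from $\omega$ to every interior compact subset of $\Omega$ at the cost of a single logarithm; and (c)~the boundary-layer argument of \cite{CT}, which controls the $L^{2}$-norm on a tubular neighborhood of $\partial\Omega$ of width $h$ by roughly $h^{1/2}\|u\|_{H^1(\Omega)}$ and then optimizes $h$ against the interior bound, paying only one further logarithm and thus producing the outer $\log|\log(\cdot)|$.

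\smallskip

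\emph{Step 2 (case $j=0$).} Fix $0<\delta<\delta_{0}$ and set $\tau:=\bigl|\log|\log(\Phi/\|u\|_{H^1(\Omega)})|\bigr|$. If $\tau\ge 1/\delta$, \eqref{eq:master-stab} gives directly $\|u\|_{L^2(\Omega)}\le C\delta^{s}\|u\|_{H^1(\Omega)}$. Otherwise $\tau<1/\delta$ unwinds to $\|u\|_{H^1(\Omega)}\le e^{e^{c/\delta}}\Phi$, and the trivial inclusion $\|u\|_{L^2(\Omega)}\le\|u\|_{H^1(\Omega)}$ provides the second term. Either way \eqref{0} for $j=0$ holds.

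\smallskip

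\emph{Step 3 (case $j=1$).} Since the map $t\mapsto t\,\bigl|\log|\log(\Phi/t)|\bigr|^{-s}$ is nondecreasing on the relevant range, \eqref{eq:master-stab} remains true with $\|u\|_{H^1(\Omega)}$ replaced on its right-hand side by $\|u\|_{H^2(\Omega)}$. Plugging this into the Gagliardo--Nirenberg interpolation $\|u\|_{H^1(\Omega)}^{2}\le C\|u\|_{L^2(\Omega)}\|u\|_{H^2(\Omega)}$ and extracting the square root gives
\[
\|u\|_{H^1(\Omega)}\le C'\,\|u\|_{H^2(\Omega)}\,\bigl|\log|\log(\Phi/\|u\|_{H^2(\Omega)})|\bigr|^{-s/2},
\]
and the same dichotomy as in Step~2, now with exponent $s/2$, produces \eqref{0} for $j=1$.

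\smallskip

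\emph{Main obstacle.} The crux is the boundary-layer step 1(c). Because $\partial\Omega$ is merely Lipschitz, an interior chain of balls cannot be pushed all the way to $\partial\Omega$ without its length, and hence the loss of smallness, blowing up; quantifying the boundary-layer contribution so that it costs only one further logarithm---while preserving the exponent $s$---is precisely what the method borrowed from \cite{CT} is designed to achieve.
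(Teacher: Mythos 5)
Your architecture coincides with the paper's: the boundary H\"older estimate near $\Gamma$ in your ingredient (a) is exactly Proposition \ref{proposition1} (quoted from \cite{Ch1,BC}), the chained three-ball propagation in (b) is the scale-$\delta$ estimate \eqref{4} borrowed from \cite{CT}, and the boundary-layer bound in (c) is the Hardy-inequality step \eqref{8}. Your Steps 2 and 3 are sound: the dichotomy on $\tau$ versus $1/\delta$ correctly converts a multiplicative double-logarithmic estimate into the additive form \eqref{0}, and the multiplicative interpolation $\|u\|_{H^1(\Omega)}^2\le C\|u\|_{L^2(\Omega)}\|u\|_{H^2(\Omega)}$ is equivalent to the additive interpolation the paper uses for $j=1$ (both halve the exponent $s$). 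The paper simply runs the logic in the opposite direction: it proves the additive inequality \eqref{0} first and deduces the double-logarithmic modulus as Corollary \ref{corollary1}.

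The gap is that your Step 1 asserts the master estimate without supplying the one quantitative fact on which everything rests, and your heuristic accounting of where the second logarithm comes from is not quite right. It does not arise from "optimizing the tubular width $h$ against the interior bound" as an independent step; it arises because reaching $\Omega^{3\delta}$ requires a chain of roughly $\delta^{-1}$ balls of radius $\delta$, so the H\"older exponent produced by iterating the three-ball inequality along the chain decays like $\psi(\delta)=se^{-C_0/\delta}$, exponentially in $1/\delta$, with constants uniform in $\delta$. This uniformity at scale $\delta$ is precisely the content of \cite[Theorem 2.1, step 1]{CT} invoked in \eqref{4}, and it is the step you would have to prove or cite precisely; without it one only controls a fixed compact subset and obtains no rate. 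Once $\psi(\delta)\sim e^{-C_0/\delta}$ is in hand, balancing $(\Phi/\|u\|_{H^1(\Omega)})^{\psi(\delta)}$ against the layer term $\delta^{s}\|u\|_{H^1(\Omega)}$ forces $\delta\sim c/\log\log(\|u\|_{H^1(\Omega)}/\Phi)$ and yields both the exponent $s$ and the constant $e^{e^{c/\delta}}$; you do flag this as the "main obstacle," but flagging it is not proving it. A second, smaller point: your layer bound $\|u\|_{L^2(\Omega_h)}\lesssim h^{1/2}\|u\|_{H^1(\Omega)}$ is the borderline case; Hardy's inequality on a Lipschitz domain (Grisvard) gives $h^{s}$ only for $s<\tfrac{1}{2}$, which is exactly why the theorem is stated for $0<s<\tfrac{1}{2}$ rather than $s=\tfrac{1}{2}$.
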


As usual, the interpolation inequality \eqref{0} yields a double logarithmic stability estimate. Precisely, we have the following corollary in which
\[
\Psi _{s,j}^c(\rho) =\left\{ \begin{array}{ll} \left(\ln \ln \rho \right)^{-\frac{s}{j+1}}\quad &\mbox{if}\; \rho >c, \\ \rho &\mbox{if}\; 0<\rho <c,\end{array}\right. \quad j=0,1,
\]
extended by continuity at $\rho=0$ by setting $\Psi^c _{s,j}(0)=0$, where $c>e$.

\begin{corollary}\label{corollary1}
Let $0<s<\frac{1}{2}$. Then there exist two constants $c>e$ and $C>0$, only depending on $s$, $\Omega$, $\Gamma$, $\lambda$ and $\kappa$ so that, for any $u\in H^2(\Omega )$, $u\ne 0$ and $j=0,1$, we have
\[
C\|u\|_{H^j(\Omega )}\le \|u\|_{H^{j+1}(\Omega )} \Psi^c_{s,j}\left( \frac{\|u\|_{H^{j+1}(\Omega )}}{\|u\|_{L^2(\Gamma)}+\|\nabla u\|_{L^2(\Gamma)}+\|Lu\|_{L^2(\Omega)}}\right).
\]
\end{corollary}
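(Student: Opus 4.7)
Set $N:=\|u\|_{H^{j+1}(\Omega)}$ and $\Xi:=\|u\|_{L^2(\Gamma)}+\|\nabla u\|_{L^2(\Gamma)}+\|Lu\|_{L^2(\Omega)}$, so that Corollary \ref{corollary1} reads $C\|u\|_{H^j(\Omega)}\le N\,\Psi^c_{s,j}(N/\Xi)$. The classical uniqueness from Cauchy data recalled in the introduction rules out $u\ne 0$ with $\Xi=0$, so $\rho:=N/\Xi$ is a well-defined positive real. The plan is to derive the inequality by optimising the parameter $\delta\in(0,\delta_0)$ in \eqref{0} as a function of $\rho$.

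Denote the constants of Theorem \ref{theorem1} by $c_0,\delta_0,C_0$, to avoid collisions with those of the corollary. For $\rho$ large I would take $\delta:=\alpha c_0/\ln\ln\rho$ with a fixed $\alpha>1$ (say $\alpha=2$). Then $c_0/\delta=\alpha^{-1}\ln\ln\rho$, hence $e^{c_0/\delta}=(\ln\rho)^{1/\alpha}$ and
\[
e^{e^{c_0/\delta}}\,\Xi = \Xi\exp\!\bigl((\ln\rho)^{1/\alpha}\bigr)= N\exp\!\bigl((\ln\rho)^{1/\alpha}-\ln\rho\bigr),
\]
which is $o\bigl((\ln\ln\rho)^{-s/(j+1)}N\bigr)$ as $\rho\to\infty$, whereas $\delta^{s/(j+1)}N=(\alpha c_0)^{s/(j+1)}(\ln\ln\rho)^{-s/(j+1)}N$. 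Inserting this $\delta$ into \eqref{0} yields
\[
C_0\|u\|_{H^j(\Omega)}\le C'\,N(\ln\ln\rho)^{-s/(j+1)},\qquad\rho>c^*,
\]
with $c^*:=\exp\exp(\alpha c_0/\delta_0)$ the threshold needed to guarantee $\delta<\delta_0$. Fixing the constant of the corollary to be $c:=\max\{c^*,e+1\}$ secures $c>e$ and identifies the right-hand side with $C'N\,\Psi^c_{s,j}(\rho)$ on $\{\rho>c\}$.

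On the complementary range $0<\rho\le c$ one has $\Psi^c_{s,j}(\rho)=\rho$; the inequality $C\|u\|_{H^j(\Omega)}\le N\rho$ would then follow from the trivial embedding $\|u\|_{H^j(\Omega)}\le N$ combined with the upper bound $\rho\le c$, after absorbing the resulting multiplicative loss into $C$. Gluing the constants from the two regimes into a single $C>0$ depending only on $s,\Omega,\Gamma,\lambda,\kappa$ finishes the proof.

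The main obstacle is the balancing that produces the log-log rate. The naive choice $\delta=c_0/\ln\ln\rho$ equates the two terms of \eqref{0} but, since $e^{e^{c_0/\delta}}\Xi=\rho\Xi=N$, only saturates at the trivial bound $\|u\|_{H^j}\lesssim N$; decoupling the two terms requires overshooting with $\alpha>1$, which is precisely what produces the factor $(\ln\ln\rho)^{-s/(j+1)}$. Making this asymptotic precise with constants depending only on the listed parameters is the one genuine computation; everything else is a routine bookkeeping of constants.
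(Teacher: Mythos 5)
Your treatment of the regime $\rho>c$ (where $\rho=N/\Xi$, $N=\|u\|_{H^{j+1}(\Omega)}$, $\Xi=\|u\|_{L^2(\Gamma)}+\|\nabla u\|_{L^2(\Gamma)}+\|Lu\|_{L^2(\Omega)}$) is correct, and it is exactly the folklore argument the paper is invoking when it writes that \eqref{0} yields the corollary ``as usual'': the naive balancing $e^{e^{c_0/\delta}}=\rho$ only reproduces the trivial bound, and overshooting with $\delta=\alpha c_0/\ln\ln\rho$, $\alpha>1$, is the standard fix. Your computation of the two resulting terms is right; the only cosmetic omission is that the threshold $c$ must also be chosen large enough that $\exp\bigl((\ln\rho)^{1/\alpha}-\ln\rho\bigr)\le C'(\ln\ln\rho)^{-s/(j+1)}$ holds on all of $(c,\infty)$ and not merely asymptotically, which costs nothing since the left-hand side is continuous and tends to $0$ at infinity.

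The regime $0<\rho\le c$ is where your argument breaks. You claim that $C\|u\|_{H^j(\Omega)}\le N\rho$ follows from $\|u\|_{H^j(\Omega)}\le N$ together with $\rho\le c$. The inequality goes the wrong way: from $\|u\|_{H^j(\Omega)}\le N$ you get $C\|u\|_{H^j(\Omega)}\le CN$, and to dominate $CN$ by $N\rho$ you need a \emph{lower} bound $\rho\ge C$, not the upper bound $\rho\le c$. For $j=1$ the step can be repaired: the trace theorem together with \eqref{1} and \eqref{2} gives $\Xi\le C_2\|u\|_{H^2(\Omega)}=C_2N$, hence $\rho\ge 1/C_2$ and the trivial bound does close this case. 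For $j=0$ no such lower bound on $\rho$ exists, because $\Xi$ contains $\|\nabla u\|_{L^2(\Gamma)}$ and $\|Lu\|_{L^2(\Omega)}$, which are not controlled by $N=\|u\|_{H^1(\Omega)}$: taking $A=I$ and $u=1+Mk^{-2}\sin(kx_1)$ with $M=k\to\infty$ gives $\|u\|_{L^2(\Omega)}\sim\|u\|_{H^1(\Omega)}\sim 1$ while $\|\Delta u\|_{L^2(\Omega)}\sim k$, so that $N\rho\sim 1/k\to 0$ although the left-hand side stays bounded away from $0$. So with the lower branch $\Psi^c_{s,0}(\rho)=\rho$ the asserted inequality cannot be recovered by any trivial argument in this regime; this is arguably a defect of the statement itself (the lower branch should be the constant $(\ln\ln c)^{-s/(j+1)}$, or at least a function bounded below near $0$, which is how such corollaries are usually phrased), but as written your proof asserts a false implication at precisely this point.
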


As we observed above, according to the classical uniqueness of continuation from Cauchy data for elliptic equations, if $u\in H^2(\Omega )$ satisfies $Lu=0$ in $\Omega$, $u=0$ on $\Gamma$ and $\nabla u=0$ on $\Gamma$, then $u=0$.

To our knowledge the optimal stability estimate for the Cauchy problem for an elliptic equation hold in two cases : (i) Lipschitz domain and $C^{1,\alpha}$-solutions  and (ii) $C^{1,1}$ domain and $H^2$-solutions. This optimal stability estimate is of single logarithmic type. For the case (i), we refer to \cite{Ch1} under an additional geometric condition on the domain. This condition was removed in \cite{BC} (see also \cite{Ch2}). A similar result was obtained in \cite{BD} for the Laplace operator. The case (ii) was established in \cite{B} for the Laplace operator. However the results in \cite{B} can be extended to an anisotropic elliptic operator in divergence form. In the present paper we deal with the case of Lipschitz domain and $H^2$-solutions. For this case we are only able to get a stability estimate of double logarithmic type (Corollary \ref{corollary1}). We do not know whether this result can be improved to a single logarithmic type.

Let us explain briefly the main steps to obtain the global stability estimate for the Cauchy problem. The first step consists in continuing a well chosen interior data to the boundary. In the second step we continue the data from an interior subdomain to another subdomain. The continuation of the Cauchy data to some interior subdomain constitute the third step. For the last two steps it is sufficient to assume that the domain is Lipschitz and the solutions have $H^2$-regularity. While in the first step, it is necessary to assume that either the domain is $C^{1,1}$ or the solutions have $C^{1,\alpha}$-regularity. Apart from these two cases we do not know how to prove the continuation result in the first step. It is worth mentioning that the last two steps give rise to a stability estimate of H\"older type and for the first step the stability estimate we obtain is of logarithmic type.

Since we can not use this classical scheme to prove Theorem \ref{theorem1}, we modify it slightly to avoid the use of the first step. The main idea consists in refining the second step. Precisely, we show that we can continue the data, away from the boundary, from a ball with arbitrary small radius to another ball with the same radius, with an exact dependence of the constants on the radius. This new step yields a stability estimate of double logarithmic type. It turn out that this result is optimal if one uses to prove it three-ball inequalities. For this reason we think that the actual technics, based on three-ball inequalities, can not be used to improve Theorem \ref{theorem1}.

As we already mentioned, the proof of Theorem \ref{theorem1} consists in an adaptation of existing results. The proposition hereafter is proved in \cite{Ch1} under an additional geometric condition and for a Lipschitz domain in \cite{BC} (see also \cite{Ch2}).

Henceforward, $C_0$ is a generic constant only depending on $\Omega$, $\lambda$ and $\kappa$, while $C_1$ is a generic constant only depending $\Omega$, $\Gamma$, $\lambda$ and $\kappa$.

\begin{proposition}\label{proposition1}
There exist a constant $\gamma >0$ and a ball $B$ in $\mathbb{R}^n$ satisfying $B\cap \Omega \ne \emptyset$, $B\cap (\mathbb{R}^n\setminus\overline{\Omega})\ne\emptyset$ and $B\cap \partial \Omega\Subset \Gamma$, only depending on $\Omega$, $\Gamma$, $\lambda$ and $\kappa$,  so that, for any  $u\in H^2(\Omega )$ and $\epsilon >0$, we have
\begin{equation}\label{3}
C_1\|u\|_{H^1(B\cap \Omega  )}\le \epsilon ^\gamma \|u\|_{H^1(\Omega )} +\epsilon ^{-1}\left(\|u\|_{L^2(\Gamma)}+\|\nabla u\|_{L^2(\Gamma)}+\|Lu\|_{L^2(\Omega)}\right).
\end{equation}
\end{proposition}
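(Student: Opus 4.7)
The proof will follow the scheme of \cite{Ch1,BC}: a geometric choice of $B$, an extension across $\Gamma$ converting the Cauchy data into a singular interior source, and a local Carleman estimate to propagate smallness from $\Gamma$ into the interior.

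First, I would fix a point $x_0\in\Gamma$ at which $\partial\Omega$ is locally the graph of a Lipschitz function, and take $B$ to be a small ball centered slightly outside $\Omega$ on a ray through $x_0$ transverse to $\partial\Omega$. For the radius small enough, $B\cap\Omega$ and $B\setminus\overline\Omega$ are nonempty and $B\cap\partial\Omega\Subset\Gamma$, and the resulting $B$ depends only on $\Omega,\Gamma,\lambda,\kappa$.

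Next, extend $u$ by zero across $\partial\Omega$ to $\tilde u\in L^2(\widetilde\Omega)$ on a slightly larger Lipschitz domain $\widetilde\Omega\supset\overline\Omega$. Two integrations by parts give $L\tilde u=\mathbf{1}_\Omega Lu+T_u$ in $\mathcal{D}'(\widetilde\Omega)$, where $T_u$ is a first-order distribution supported on $\partial\Omega$ depending linearly on $u|_{\partial\Omega}$ and $A\nabla u\cdot\nu|_{\partial\Omega}$; the $H^2$ hypothesis on $u$ ensures these traces lie in $L^2(\partial\Omega)$. Now pick a smooth cutoff $\chi$ supported in a ball $B'\Supset B$ with $\chi\equiv 1$ on $B$ and $\mathrm{supp}\,\chi\cap\partial\Omega\subset\Gamma$, and a pseudoconvex weight $\varphi$ for $L$ centered near $x_0$ and decreasing radially, so that
\[
\Phi_c:=\max_{\mathrm{supp}\nabla\chi\cap\Omega}\varphi<\varphi_*:=\min_{B\cap\Omega}\varphi<\Phi:=\max_{\mathrm{supp}\chi\cap\Gamma}\varphi.
\]
Applying an interior Carleman estimate for $L$ with weight $e^{2\tau\varphi}$ to $\chi\tilde u$, and using $L(\chi\tilde u)=\chi\mathbf{1}_\Omega Lu+\chi T_u+[L,\chi]\tilde u$, one obtains, after a duality argument for the distributional source $\chi T_u$ and a Caccioppoli inequality on a concentric slightly larger ball,
\[
e^{2\tau\varphi_*}\|u\|^2_{H^1(B\cap\Omega)}\le Ce^{2\tau\Phi}\bigl(\|u\|^2_{L^2(\Gamma)}+\|\nabla u\|^2_{L^2(\Gamma)}+\|Lu\|^2_{L^2(\Omega)}\bigr)+Ce^{2\tau\Phi_c}\|u\|^2_{H^1(\Omega)},
\]
for all $\tau\ge\tau_0$. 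Setting $\epsilon=e^{-\tau(\Phi-\varphi_*)}$ and $\gamma=(\varphi_*-\Phi_c)/(\Phi-\varphi_*)>0$ then translates this directly into \eqref{3}.

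The main obstacle is the merely Lipschitz regularity of $\partial\Omega$, which precludes direct application of a Carleman estimate up to $\partial\Omega$ with explicit trace terms. The zero-extension trick transfers the boundary data to a distributional source on an enlarged domain, where only an interior Carleman estimate is needed; in exchange, $T_u$ must be paired against the Carleman-weighted test function in a way that produces only the $L^2$ norms of $u|_\Gamma$ and $\nabla u|_\Gamma$ on the right-hand side, with a polynomial-in-$\tau$ loss that is absorbed by the gap $\Phi-\varphi_*$. Engineering the weight $\varphi$ and the cutoff $\chi$ so that the strict ordering $\Phi_c<\varphi_*<\Phi$ holds and the duality for $\chi T_u$ closes with the required trace regularity is the delicate technical step that governs the exponent $\gamma$ in \eqref{3}.
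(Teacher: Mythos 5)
The paper does not actually prove Proposition \ref{proposition1}; it cites \cite{Ch1} (under an extra geometric condition) and \cite{BC} for the Lipschitz case. Your architecture --- zero extension across $\Gamma$, conversion of the Cauchy data into a layer-type source $T_u$ supported on $\partial\Omega$, an \emph{interior} Carleman estimate on an enlarged domain, and optimization of the weight gap --- is a legitimate route and is essentially the strategy of \cite{BD} for the Laplacian on Lipschitz domains; its advantage over a boundary Carleman estimate is precisely that the Lipschitz boundary is never seen by the integrations by parts, only by the source term. So the plan is sound and consistent with the spirit of the cited proofs.

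The genuine gap is that your central displayed inequality is asserted, not derived, and the derivation is the entire content of the proposition. Since $u$ generally has a nonzero trace on $\Gamma$, $\chi\tilde u$ is not in $H^1(\widetilde\Omega)$, so the direct Carleman energy method (substituting $w=e^{\tau\varphi}\chi\tilde u$ into the estimate) is inapplicable; one is forced into the dual formulation, i.e.\ estimating $\|e^{\tau\varphi}\chi\tilde u\|$ by pairing against solutions of the adjoint equation and invoking a Carleman estimate of $H^{-1}$ type (or with an extra derivative gain) to absorb the double-layer part of $T_u$, whose density is only $u|_\Gamma\in L^2(\Gamma)$. Making that pairing close requires $L^2(\Gamma)$ control of the conormal derivative of the weighted adjoint solution on the Lipschitz surface $\Gamma$ (a Rellich-type estimate), uniformly in $\tau$ up to polynomial loss; this is exactly the ``delicate technical step'' you defer, and it is not a routine consequence of a standard interior Carleman inequality. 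Two smaller inaccuracies: the commutator $[L,\chi]\tilde u$ is itself not a purely interior term (since $\nabla\tilde u$ contains a layer on $\partial\Omega\cap\operatorname{supp}\nabla\chi$, which must be sent to the Cauchy-data side), and the $H^1(B\cap\Omega)$ norm on the left cannot come from an interior Caccioppoli inequality because $B\cap\Omega$ abuts $\partial\Omega$; you need a Caccioppoli inequality up to $\Gamma$ whose boundary term is again controlled by the Cauchy data. None of these is fatal, but as written the proposal reduces the proposition to an unproved estimate of comparable difficulty, so it cannot be accepted as a proof without executing the duality step.
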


\begin{proof}[Proof of Theorem \ref{theorem1}]
Let $B$ as in the preceding proposition. Pick then $\tilde{x}\in B\cap \partial \Omega$. As $B\cap \Omega$ is Lipschitz, it contains a cone with vertex at $\tilde{x}$. That is we can find $R>0$, $\theta \in ]0,\pi /2[$ and $\xi \in \mathbb{S}^{n-1}$ so that
\[
\mathcal{C}(\tilde{x})=\left\{x\in \mathbb{R}^n;\; 0<|x-\tilde{x}|<R,\; (x-\tilde{x})\cdot \xi >|x-\tilde{x}|\cos \theta \right\}\subset B\cap \Omega .
\]
Let $x_\delta = \tilde{x}+ \frac{\delta}{3\sin \theta} \xi$, with $\delta < \frac{3R\sin \theta}{2}$. Then $\mbox{dist}(x_\delta ,\partial (B\cap \Omega))> 3\delta $.

Define, for $\delta >0$,
\begin{align*}
&\Omega^\delta =\{x\in \Omega ;\; \mbox{dist}(x,\partial \Omega )>\delta\},
\\
&\Omega_\delta =\{x\in \Omega ;\; \mbox{dist}(x,\partial \Omega )<\delta\}
\end{align*}
and set
\[
\delta ^\ast =\sup \{\delta >0;\; \Omega ^\delta \ne \emptyset\}.
\]

Let $0<\delta \le \delta ^\ast/3$. Then a slight modification of the proof of \cite[Theorem 2.1, step 1]{CT} yields, for any $u\in H^2(\Omega )$, $y,y_0\in \Omega^{3\delta}$ and $\epsilon >0$,
\begin{equation}\label{4}
C_0\|u\|_{L^2(B(y,\delta ))}\le  \epsilon^{\frac{1}{1-\psi (\delta )}}\| u\|_{L^2(\Omega )}+\epsilon^{-\frac{1}{\psi (\delta )}}\left(\|Lu\|_{L^2(\Omega )}+\|u\|_{L^2(B(y_0,\delta ))}\right).
\end{equation}

Here $\psi$ is of the form $\psi(\delta )=se^{-C_0/\delta}$, with $0<s<1$ only depending on $\Omega$, $\lambda$ and $\kappa$.

Putting together \eqref{3} and \eqref{4} with $y_0=x_\delta$, we find, for any $u\in H^2(\Omega )$, $y\in \Omega^{3\delta}$, $0<\delta < \delta_0:=\min \left(\frac{\delta ^\ast}{3},\frac{3R\sin \theta}{2}\right)$, $\epsilon >0$ and $\eta >0$,
\begin{align}
&C_1\|u\|_{L^2(B(y,\delta ))}\le  \epsilon^{\frac{1}{1-\psi (\delta )}}\| u\|_{L^2(\Omega )}\label{5}
\\
&+\epsilon^{-\frac{1}{\psi (\delta )}}\left[\|Lu\|_{L^2(\Omega )} +\eta ^\gamma \|u\|_{H^1(\Omega )} +\eta ^{-1}\left(\|u\|_{L^2(\Gamma)}+\|\nabla u\|_{L^2(\Gamma)}+\|Lu\|_{L^2(\Omega)}\right)\right].\nonumber
\end{align}

In \eqref{5}, take 
\[
\eta =\epsilon^{\frac{1}{\gamma \psi (\delta )(1-\psi (\delta ))}}
\]
in order to obtain 
\begin{align}
C_1\|u\|_{L^2(B(y,\delta ))}&\le \phi _0(\epsilon ,\delta)\|u\|_{H^1(\Omega )}\label{6}
\\
&+\phi_1(\epsilon ,\delta)\left(\|u\|_{L^2(\Gamma)}+\|\nabla u\|_{L^2(\Gamma)}+\|Lu\|_{L^2(\Omega)}\right),\nonumber
\end{align}
where
\begin{align*}
&\phi _0(\epsilon ,\delta)=\epsilon^{\frac{1}{1-\psi (\delta )}},
\\
&\phi_1(\epsilon ,\delta)=\epsilon^{-\frac{1}{\psi (\delta )}}\max \left(1, \epsilon^{-\frac{1}{\gamma \psi (\delta )(1-\psi (\delta ))}}\right).
\end{align*}

On the other hand, it is straightforward to check that $\Omega^{3\delta}$ can be recovered by at most $k^n$ balls with center in $\Omega^{3\delta}$ and radius $\delta$, where $k=[c/\delta]$, the constant $c$ only depends on $n$ and the diameter of $\Omega$. Whence we obtain from \eqref{6}
\begin{align}
C_1\|u\|_{L^2(\Omega^{3\delta})}&\le \delta^{-n}\phi _0(\epsilon ,\delta)\|u\|_{H^1(\Omega )}\label{7}
\\
&+\delta^{-n}\phi_1(\epsilon ,\delta)\left(\|u\|_{L^2(\Gamma)}+\|\nabla u\|_{L^2(\Gamma)}+\|Lu\|_{L^2(\Omega)}\right).\nonumber
\end{align}

Now, according to Hardy's inequality (see for instance \cite[Theorem 1.4.4.4, page 29]{Gr}), for $0<s<\frac{1}{2}$, there exists $\varkappa$, only depending on $\Omega$ and $s$ so that
\[
\|u\|_{L^2(\Omega _{3\delta})}\le (3\delta)^s \left\|\frac{u}{\mbox{dist}(x,\partial \Omega)^s}\right\|_{L^2(\Omega )}\le \varkappa \delta ^s\|u\|_{H^s(\Omega )}.
\]
As $H^1(\Omega )$ is continuously embedded in $H^s(\Omega)$, changing if necessary $\varkappa$, we have
\begin{equation}\label{8}
\|u\|_{L^2(\Omega _{3\delta})}\le  \varkappa \delta ^s\|u\|_{H^1(\Omega )}.
\end{equation}

Henceforward, $0<s<\frac{1}{2}$ is fixed and $C$ is a generic constant that only depends on $\Omega$, $\Gamma$, $\lambda$, $\kappa$ and $s$.

Putting together \eqref{7} and \eqref{8}, we get
\begin{align}
C\|u\|_{L^2(\Omega )}&\le \left(\delta^{-n}\phi _0(\epsilon ,\delta)+\delta ^s\right)\|u\|_{H^1(\Omega )}\label{9}
\\
&+\delta^{-n}\phi_1(\epsilon ,\delta)\left(\|u\|_{L^2(\Gamma)}+\|\nabla u\|_{L^2(\Gamma)}+\|Lu\|_{L^2(\Omega)}\right).\nonumber
\end{align}

We take in \eqref{9} $\epsilon$ so that $\delta^{-n}\phi _0(\epsilon ,\delta)=\delta ^s$ or equivalently $\epsilon =\delta^{(n+s)\psi (\delta )}$. Then elementary computations yield
\begin{align}
C\|u\|_{L^2(\Omega )}&\le \delta ^s\|u\|_{H^1(\Omega )}\label{10}
\\
&+e^{e^{c/\delta}}\left(\|u\|_{L^2(\Gamma)}+\|\nabla u\|_{L^2(\Gamma)}+\|Lu\|_{L^2(\Omega)}\right).\nonumber
\end{align}
This inequality corresponds to \eqref{0} when $j=0$.

Next, noting that $H^1(\Omega )$ can be seen as an interpolated space between $L^2(\Omega )$ and $H^2(\Omega )$, we have pour $\epsilon >0$,
\[
C_\Omega \|u\|_{H^1(\Omega )}\le \epsilon \|u\|_{H^2(\Omega )}+\epsilon^{-1}\|u\|_{L^2(\Omega )},
\]
the constant $C_\Omega$ only depends on $\Omega$.

This last inequality with $\epsilon =\delta^{s/2}$ and \eqref{10} entail
\begin{align*}
C\|u\|_{H^1(\Omega )}&\le \delta ^{s/2}\|u\|_{H^2(\Omega )}
\\
&+e^{e^{c/\delta}}\left(\|u\|_{L^2(\Gamma)}+\|\nabla u\|_{L^2(\Gamma)}+\|Lu\|_{L^2(\Omega)}\right).
\end{align*}
That is we proved \eqref{0} in the case $j=1$. 
\end{proof}

\begin{remark}
{\rm
(i) It is worth mentioning that Theorem \ref{theorem1} still holds if $L$ is substituted by $L+L_1$, where $L_1$ is a first order partial differential operator with bounded coefficients. In that case the constants $c$ and $C$ in the statement of Theorem \ref{theorem1} may also depend on bounds on the coefficients of $L_1$.
\\
(ii) For $0<t<2$, we have the following interpolation inequality, with $u\in H^2(\Omega)$ and $\epsilon >0$,
\[
C_\Omega \|u\|_{H^t(\Omega )}\le \epsilon ^{\frac{t}{2-t}}\|u\|_{H^2(\Omega )}+\epsilon^{-1}\|u\|_{L^2(\Omega )}.
\]
We can then proceed as in the preceding proof in order to get, for $u\in H^2(\Omega )$ and $0<\delta <\delta_0$,
\begin{align*}
C\|u\|_{H^t(\Omega )}&\le \delta ^{\frac{st}{2}}\|u\|_{H^2(\Omega )}
\\
&+e^{e^{c/\delta}}\left(\|u\|_{L^2(\Gamma)}+\|\nabla u\|_{L^2(\Gamma)}+\|Lu\|_{L^2(\Omega)}\right).
\end{align*}
Here the constants $c$ and $C$ only depend on $s$, $t$, $\Omega$, $\Gamma$, $\lambda$ and $\kappa$, and $\delta_0$ only depends on $\Omega$.
}
\end{remark}

\end{document}